\theoremstyle{plain}
\newtheorem{theorem}{Theorem}[section]
\newtheorem{lemma}[theorem]{Lemma}
\newtheorem{proposition}[theorem]{Proposition}
\newtheorem{corollary}[theorem]{Corollary}
\theoremstyle{definition}
\newtheorem{definition}[theorem]{Definition}
\newtheorem{question}[theorem]{Question}
\theoremstyle{remark}
\newtheorem{remark}[theorem]{Remark}
\newtheorem*{To show}{To show}
\let\phi\varphi
\renewcommand\emptyset\varnothing
\newcommand\Aut{\operatorname{Aut}}
\newcommand\Inn{\operatorname{Inn}}
\newcommand{\Sub}{\operatorname{Sub}}
\newcommand{\Hit}{\operatorname{Hit}}
\newcommand{\Miss}{\operatorname{Miss}}
\newcommand{\conv}{\operatorname{conv}}
\newcommand\bbR{\mathbf{R}}
\newcommand\bbZ{\mathbf{Z}}
\newcommand\bfY{\mathcal{Y}}
\newcommand\calU{\mathcal{U}}
\newcommand\calW{\mathcal{W}}
\newcommand\calY{\mathcal{Y}}
\title{On compact uniformly recurrent subgroups}
 \author{Pierre-Emmanuel Caprace\thanks{Institut de Recherche en Math\'ematiques et Physique, UCLouvain, Belgium} \and 
 Gil Goffer\thanks{University of California at San Diego, USA} \and 
 Waltraud Lederle\thanks{Intitute of Algebra, TU Dresden, Germany} \and Todor Tsankov\thanks{Université Claude Bernard Lyon 1, Institut Camille Jordan, France}}
\date{\today}
\begin{document}

\maketitle

\begin{abstract}
    Let a group $\Gamma$ act on a paracompact, locally compact, Hausdorff space $M$ by homeomorphisms and let $2^M$ denote the set of closed subsets of $M$. We endow $2^M$ with the Chabauty topology, which is compact and admits a natural $\Gamma$-action by homeomorphisms. We show that for every minimal $\Gamma$-invariant closed subset $\calY$ of $2^M$ consisting of compact sets, the union $\bigcup \calY\subset M$ has compact closure. %The same result holds if $M$ is a connected, totally ordered, locally compact space or if $M$ is $\omega_1$. Lastly, the same result holds without any assumptions on $M$ in case $\calY$ has an element with only finitely many connected components.
    
    As an application, we deduce that every compact uniformly recurrent subgroup of a locally compact group is contained in a compact normal subgroup. This generalizes a result of U\v{s}akov on compact subgroups whose normalizer is compact. 
\end{abstract}

\section{Introduction} 
Let $G$ be a group. Given a locally compact, Hausdorff $G$-space $M$, the set $2^M$ of closed subsets of $M$, endowed with the Chabauty topology, is naturally a compact $G$-space. This dynamical system has been extensively studied in the case where  $M$ itself is compact, and turned out to play a relevant role in the study of the $G$-space $M$ itself, see for example \cite{Gl75,BauerSigmund1975,AG77}, or the more recent papers \cite{Auslander00,AkinAuslanderNagar2017,Nagar22} and references therein.
A particular emphasis has been put on minimal subsystems of $2^M$, which are sometimes referred to as \emph{quasi-factors}.
%One way to phrase our main result would be that in the study of quasi-factors, by allowing non-compact $M$, no new examples of quasi-factors are obtained \waltraud{does that make sense and if yes, shall we write that here and in this formulation?}\gil{I'd move this last sentence to after Theorem 1.1, see the comment there}

If $M$ is compact, every element of $2^M$ is obviously compact.
In this note, we allow $M$ to be non-compact; this is already done in \cite{AkinAuslanderNagar2017} with $G=\bbZ$, but in the study of minimal subsystems of $2^M$ the authors restrict themselves to the case where $M$ is compact.
Our goal is to study those minimal subsystems $\mathcal Y$ of $2^M$ that entirely consist of compact subsets of $M$.
An obvious sufficient condition is that  all elements of $\mathcal Y$ are contained in a compact subset of $M$; in other words, the union $\bigcup \mathcal Y$ has compact closure.

Our main results show that this sufficient condition is also necessary, provided the space $M$ satisfies mellow conditions. One of these conditions is that the space $M$ is  \emph{paracompact}, i.e.,  
every open cover of this space admits a locally finite open refinement. 
\emph{Locally finite} means that each point admits a neighbourhood that intersects only finitely many of the sets of the cover.
For example, any $\sigma$-compact space is paracompact; more generally indeed, a locally compact Hausdorff space is paracompact if and only if it is a disjoint union of open $\sigma$-compact subsets, see Proposition~\ref{prop:paracompactness} below.

We can now state the following. 
\begin{theorem}\label{thm:main thm}
Let a group $\Gamma$ act by homeomorphisms on a paracompact, locally compact, Hausdorff space $M$ 
and consider the associated action of $\Gamma$ on the compact space $2^M$ of closed subsets of $M$.
Let $\calY$ be a minimal, closed, $\Gamma$-invariant subset of $2^M$.

Then all elements of $\calY$ are compact if and only if $\bigcup \calY$ has compact closure.
\end{theorem}

%One way to see our main result is that, in the study of quasi-factors, no new examples of would be obtained by allowing non-compact $M$.

We shall also show that the same conclusion holds if the paracompactness of $M$ is replaced by other formally independent conditions (see Section \ref{sec:special cases}),   notably by the hypothesis that some $Y \in \calY$ has finitely many connected components (see Theorem~\ref{prop:main thm finitely many conn component}). However, we do not know whether the conclusion holds in full generality for a locally compact Hausdorff space $M$ without any further restriction.
%In particular, we ask the following.
\begin{question}
    Can the hypothesis of paracompactness be discarded in Theorem~\ref{thm:main thm}?
\end{question}

Our initial motivation to study this question arose from the special case where the space $M$ is a locally compact group $G$. The conjugation action of $G$ on itself yields a continuous $G$-action on $2^G$, which preserves the closed subset $\Sub(G)$ formed by the closed subgroups of $G$. A minimal subsystem of $\Sub(G)$ is called a \emph{uniformly recurrent subgroup}, or \emph{URS}. URS were first considered by Glasner and Weiss \cite{glasner2015uniformly}. Using Theorem~\ref{thm:main thm}, we establish the following:

%Let $G$ be a locally compact group. A natural action of $G$ to consider is the action by conjugation on the space of its closed subgroups $M = \Sub(G) \subset 2^G$. Notethat $M = \Sub(G)$ is compact and Hausdorff and so it satisfies the assumptions of Theorem \ref{thm:main thm}. A closed subset of $\Sub(G)$ that is invariant under the $G$-action by conjugation, and on which the $G$-action is minimal, is called a uniformly recurrent subgroup (or URS). Combining Theorem \ref{thm:main thm} in the case $M=\Sub(G)$ with a modified version (see Cor \ref{cor:Ushakov_Gamma_version}) of a proposition by Ushakov (see Proposition \ref{prop:Ushakov}), we get the following: 

\begin{corollary}\label{cor:main cor with URS}
Let $G$ be a locally compact group and   $\calY$ be a URS of $G$ consisting of compact subgroups. Then $\bigcup \mathcal{Y}$ is contained in a compact normal subgroup.    
\end{corollary}

%We underline that the algebraic features of Chabauty limits of compact subgroups or, more generally, of amenable closed subgroups, are not well understood in a general locally compact group (see \cite{CM-relamen} for a discussion).  

This corollary  emerged as an attempt to identify a purely topological analogue of a theorem by Bader--Duchesne--L\'ecureux \cite{bader2016amenable}, asserting that an amenable invariant random subgroup of a locally compact, second countable group $G$ is almost surely contained in an amenable normal subgroup of $G$.

In the special case where the URS $\calY$ is \textit{homogeneous}, i.e., when it consists of a single conjugacy class (equivalently,  every $Q \in \calY$ has a cocompact normalizer in $G$), we recover a result of V.~U\v sakov, see the lemma on the first page of \cite{Usha1970}.

As an application we get a result about confined subgroups.
A closed subgroup $H \leq G$ is called \emph{confined} if the closure of the conjugacy class of $H$ in $\Sub(G)$ does not contain the trivial subgroup. In particular $H$ itself must be non-trivial.  The following consequence of Corollary~\ref{cor:main cor with URS} is  immediate. 

\begin{corollary}
    Let $G$ be a locally compact group whose only compact normal subgroup is the trivial one. Let $K \leq G$ be a compact subgroup. If $K$ is confined, then the Chabauty-closure of the conjugacy class of $K$  contains a non-compact subgroup. 
\end{corollary}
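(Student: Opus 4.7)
The plan is to argue by contradiction: suppose the Chabauty-closure $X := \overline{\{gKg^{-1} : g \in G\}}$ contains only compact subgroups, and derive that $\{e\} \in X$, contradicting confinement of $K$.

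The first step is to produce a URS inside $X$. Since $\Sub(G)$ is compact and $X$ is closed and $G$-invariant under the conjugation action, $X$ is itself a non-empty compact $G$-space. By Zorn's lemma applied to the family of non-empty closed $G$-invariant subsets of $X$, there exists a minimal such subset $\calY \subseteq X$; this $\calY$ is by definition a URS of $G$. Because every element of $X$ is compact by our contradiction hypothesis, every element of $\calY$ is compact as well.

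Now I would apply Corollary~\ref{cor:main with URS} directly to $\calY$: it yields a compact normal subgroup $N \trianglelefteq G$ containing $\bigcup \calY$. By the standing hypothesis on $G$, its only compact normal subgroup is trivial, so $N = \{e\}$; hence $\bigcup \calY = \{e\}$, which forces $\calY = \{\{e\}\}$. In particular $\{e\} \in \calY \subseteq X$, i.e.\ the trivial subgroup lies in the Chabauty-closure of the conjugacy class of $K$, contradicting the assumption that $K$ is confined.

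There is really no hard step here once Corollary~\ref{cor:main with URS} is in hand; the only thing to be careful about is the existence of a minimal closed $G$-invariant subset of $X$, which is the usual Zorn argument in a compact Hausdorff $G$-space and requires only that $X$ be non-empty (which holds since $K \in X$).
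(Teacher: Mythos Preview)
Your proof is correct and is precisely the ``immediate'' argument the paper has in mind: the paper does not spell out a proof of this corollary, merely stating that it follows at once from Corollary~\ref{cor:main with URS}, and your Zorn-plus-Corollary~\ref{cor:main with URS} contradiction is the natural way to unpack that. The existence of the minimal subsystem is exactly Remark~\ref{rem:minimal G spaces}(1) in the paper, so nothing is missing.
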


%Since a simple locally compact group (e.g. $\mathrm{PSL}_2(\mathbf R)$) can contain a cocompact amenable subgroup, it is not true in general that an amenable URS is contained in an amenable normal subgroup. Corollary~\ref{cor:main with URS} says that this statement becomes however true if amenability is replaced by compactness. 

\subsection*{Acknowledgements}

PEC is supported in part by the FWO and the F.R.S.-FNRS under the EOS programme (project ID 40007542).
GG is supported by the University of California President's Postdoctoral Fellowship.
WL has been a F.R.S.-FNRS postdoctoral researcher.
%\waltraud{Affiliations should be uniform (i.e. either only universities or institutes everywhere). I think only universities is better.}\gil{But then maybe the acknowledgements is not the right place for it} \waltraud{I just didn't know where else to put my comment.}

\section{Preliminaries}

Pursuing the aim of making the paper as self-contained as possible, we have included complete proofs of preliminary lemmas whenever we found it justified, without altering the overall conciseness of this note.  

\subsection{Minimal systems}
Let $\Gamma$ be a group and let $X$ be a topological space. When $\Gamma$ acts on $X$ by homeomorphisms we say that $X$ is a \emph{$\Gamma$-space}, and the pair $(X,\Gamma)$ is called a \emph{dynamical system}, or simply a \emph{system}. If $(X,\Gamma)$ is a dynamical system and $Y\subset X$ is a non-empty, closed, $\Gamma$-invariant subset, then $(Y,\Gamma)$ is a \emph{subsystem} of $(X,\Gamma)$.

\begin{definition}
A dynamical system $(X,\Gamma)$ is called \emph{minimal} if $X$ is non-empty and there is no non-empty closed, $\Gamma$-invariant subset $Y \subsetneq X$. Equivalently, every $\Gamma$-orbit is dense in $X$.
\end{definition}

%\begin{definition}
%If $X = 2^M$, for $M$ a locally compact, Hausdorff $\Gamma$-space, and the action of $\Gamma$ on $X$ is induced from the action of $\Gamma$ on $M$, we call a minimal subsystem of $(X,\Gamma)$ a \emph{minimal subset system of $(M,\Gamma)$}.

%If $X = \Sub(G)$ is the Chabauty space of a locally compact group $G$, and $\Gamma$ is a group of topological automorphisms of $G$, acting on $G$ and therefore on $\Sub(G)$, we call a minimal subsystem of $(\Sub(G),\Gamma)$ a \emph{minimal subgroup system of $(G,\Gamma)$}. In the special case $\Gamma=\Inn(G)$, this is known in the literature as a \emph{Uniformly Recurrent Subgroup on $G$}, or URS, in short.
%\end{definition}

%We emphasize that $\{\varnothing\}$ is always a minimal subsystem of $(2^M, \Gamma)$, for every $\Gamma$-space $M$.

We note that it is a standard application of Zorn's lemma that every %non-empty, closed, invariant subset of $Y\subseteq M$ contains a non-empty, closed, invariant subset $X\subseteq Y$ such that $(X,\Gamma)$ is a minimal subsystem of $(M,\Gamma)$.
dynamical system with $X$ compact contains a minimal subsystem.

A subset $S$ of a group $\Gamma$ is called \emph{syndetic} if there is a finite set $F\subset \Gamma$ such that $\Gamma=FS$. %Suppose $X$ is a compact metric $\Gamma$-space. Then a point $x\in X$ is said to be \emph{almost periodic} if for every neighborhood $U$ of $x$, the set $R(x,U):=\{g\in\Gamma \mid gx\in U\}$ is syndetic.
The following lemma is well-known in many dynamical settings.
\begin{lemma}\label{lem:returning times set is syndetic}
    Let $\Gamma$ be a group and $X$ a compact $\Gamma$-space. If $(X, \Gamma)$ is minimal, then for any point $x\in X$ and any non-empty open subset $U\subset X$, the set $R(x,U):=\{g\in\Gamma \mid gx\in U\}$ is syndetic.
\end{lemma}

%The following theorem is well known (see e.g., Ch.1-Theorem 1 in \cite{auslander1988minimal})

%\begin{theorem}\label{thm:syndetic}
%    Let $\Gamma$ be a countable group acting on a compact metric space $X$. Then $x\in X$ is almost periodic if and only if $\overline{\Gamma x}$ is a minimal set.
%\end{theorem}

\begin{proof}
  Let $U\subset X$ be open. By minimality, $X = \bigcup_{g \in \Gamma} gU$, and by compactness, there is a finite $F \subset \Gamma$ such that $X = \bigcup_{g \in F} gU$. Then, for any $x$, $G = FR(x,U)$.
\end{proof}

%The set $R(x,U):=\{g\in\Gamma \mid gx\in U\}$ is often called \emph{the set of return times of $x$ to $U$}.
Suppose further that $X$ is Hausdorff, then a converse to this lemma also holds. Namely, let $x\in X$ be any point. If for every open neighborhood $U$ of $x$, the set $R(x,U)$ is syndetic, then $(\overline{\Gamma x},\Gamma)$ is minimal.
%\waltraud{Internal remark probably to be deleted: If you are not Hausdorff, you can produce stupid counterexamples with an open, dense point and the trivial group.}

%, and a point $x\in M$ for which $R(x,U)$ is syndetic for every neighborhood $U$ of $x$ is called in the literature \emph{syndetically recurrent}, \emph{uniformly recurrent}, or \emph{almost-periodic}.

\subsection{Minimal systems consisting of subsets}
Let $M$ be a locally compact, Hausdorff topological space. We denote by $2^M$ the space of closed subsets of $M$ equipped with the \emph{Chabauty topology}.
This topology is also known as the \emph{Fell topology}, the \emph{topology of closed convergence} or, if expressed in terms of sequences, the \emph{Painlev\'e--Kuratowski convergence}. For an introduction, we refer to~\cite{Beer_Fell} and~\cite{Beer1993}.
A subbasis for the Chabauty topology is given by 
$$\Hit(U):=\{A\in 2^M|A\cap U \neq \emptyset\} \quad \text{and} \quad \Miss(K):=\{A\in 2^M|A\cap K=\emptyset\},$$ 
where $U$ ranges over all open and $K$ over all compact subsets of $M$.
The space $2^M$ is compact, and the group of homeomorphisms of $M$ acts on $2^M$ by homeomorphisms. %, so that $2^M$ is naturally a $\mathrm{Homeo}(M)$-space. 

%The current note is concerned with minimal systems that consists of compact sets.
In particular, given a group $\Gamma$, if $M$ is a $\Gamma$-space, then  $2^M$ is also a $\Gamma$-space. By convention, when referring to a system of the form $(2^M,\Gamma)$, with $M$ a $\Gamma$-space, we mean that the $\Gamma$-action on $2^M$ is the one induced from the $\Gamma$-action on $M$.
The goal of  this article is to address the following.

\begin{question}\label{ques:main}
    Let  $(\calY,\Gamma)$ be  a minimal subsystem of $(2^M,\Gamma)$, whose  elements are all compact. When is the closure of $\bigcup \calY$ compact?
\end{question}

In this paper, we only find several  sufficient conditions that are independent of each other. The question whether the statement holds without any  condition on $M$ whatsoever remains open. 

\begin{remark}\label{rmk:emptyset}
We note that Question \ref{ques:main} has a trivial answer if 
 for every compact set $K \subset M$ there is $\gamma \in \Gamma$ with $K \cap \gamma K = \emptyset$. 
 Then $\emptyset \in \overline{\Gamma K}$ for all compact $K \subset M$ and thus
 $(\{\emptyset\},\Gamma)$ is the only minimal subsystem of $(2^M,\Gamma)$ consisting of compact sets. A non-paracompact example of this kind will be discussed in Section~\ref{sec:special cases}.
\end{remark}

The following two lemmas, regarding the closure of $\bigcup \calY$, will be used later.

\begin{lemma}\label{lem:minimality of calY}
Let $\Gamma$ be a group, let $M$ be a locally compact, Hausdorff $\Gamma$-space, and let $(\calY,\Gamma)$ be a minimal subsystem of $(2^M,\Gamma)$. 
Then for every $Y \in \calY$, we have $\overline{\bigcup \Gamma Y} = \overline{\bigcup \calY}$.
\end{lemma}
\begin{proof}
The containment $\subseteq$ is clear since $\Gamma Y\subseteq \mathcal{Y}$ for any $Y\in \mathcal{Y}$. The containment $\supseteq$ follows by observing that  $\calY \cap 2^{\overline{\bigcup \Gamma Y}}$ is  a closed, non-empty, $\Gamma$-invariant subset of $\mathcal{Y}$, while $\mathcal{Y}$ is minimal.
\end{proof}

\begin{lemma}\label{lem:nbhd_W_and_cpct}
Let $\Gamma$ be a group, let $M$ be a locally compact, Hausdorff $\Gamma$-space, and let $(\calY,\Gamma)$ be a minimal subsystem of $(2^M,\Gamma)$ whose elements are compact. Suppose that the closure of $\bigcup \calY$ is non-compact. Then 
for every $Y\in \calY$, every non-empty open subset $\calW\subset \calY$, and every compact subset $K\subset M$, there exists $g\in \Gamma$ such that $gY \in \calW$ and $gY\nsubseteq K$.
\end{lemma}
\begin{proof}
Assume that the lemma does not hold. Thus there exists $Y\in \calY$, a non-empty open subset $\calW\subset \calY$,
and a compact $K\subset M$ such that $gY\subseteq K$ for every $g\in R=R(Y,\calW)=\{g\in \Gamma \mid gY\in \calW\}$. In particular, $\bigcup _{g\in R}gY\subseteq K$. By Lemma~\ref{lem:returning times set is syndetic}, there exists a finite subset $F \subset \Gamma$ with $FR=\Gamma$.
It follows that
\begin{align*}
\bigcup_{g \in \Gamma} gY = \bigcup_{f \in F}\bigcup_{g \in R} fgY \subset \bigcup_{f \in F} fK.
\end{align*}
Since $F$ is finite, the latter set is compact. However, by Lemma \ref{lem:minimality of calY}, we have that $\overline{\bigcup_{g\in \Gamma} g Y} =\overline{\bigcup \calY}$, contradicting the assumption that $\overline{\bigcup \calY}$ is not compact.
 \end{proof}

\subsection{Paracompact spaces}

We recall the following elementary fact about paracompact spaces.
\begin{lemma}\label{lem:paracpct_cover}
    Let $M$ be a paracompact, locally compact, Hausdorff space.
Let $\calU$ be a locally finite cover of $M$, and let $K\subset M$ be a compact subset. Then the set $\{U \in \calU : \overline U \cap K \neq \emptyset\}$ is finite.
\end{lemma}
\begin{proof}
    Suppose $\{U_n\in \calU :  n\in \mathbb{N}\}$ are infinitely many sets from $\calU$ such that $\overline U_n\cap K\neq \emptyset$ for all $n\in \mathbb{N}$. For every $n\in \mathbb{N}$, let $x_n\in \overline U_n\cap K$. Let $x\in K$ be an accumulation point of $\{x_n : n\in \mathbb{N}\}$. Any neighborhood $U$ of $x$ contains $x_n$ for infinitely many indices $n$, and so $U\cap U_n\neq \emptyset$ for infinitely many indices $n$. This contradicts the assumption that $\calU$ is locally finite.
\end{proof}

Lastly, we will use the following characterizations of paracompact spaces.
For the equivalence of (i) and (iii), see \cite[Chapitre 1, §9]{bourbaki2007topologie}.
That (i) implies (ii) is immediate, and (ii) implies (iii) as a consequence of Lemma \ref{lem:paracpct_cover}.

\begin{proposition}\label{prop:paracompactness}
    For a locally compact, Hausdorff topological space $M$, the following are equivalent:
    \begin{enumerate}[label=(\roman*)]
        \item $M$ is paracompact.
        \item $M$ admits a locally finite open cover in which every set has compact closure.
%        \waltraud{See https://mathoverflow.net/questions/203673/does-every-locally-compact-hausdorff-space-admit-a-locally-finite-open-covering for a nice proof}
        \item $M$ admits a partition into $\sigma$-compact, open sets.
    \end{enumerate}
\end{proposition}

\section{The main result for paracompact spaces and locally compact groups}

\label{sec:URS}

\paragraph{Proof of main theorem.} We begin by proving Theorem \ref{thm:main thm}.

\begin{proof}[Proof of Theorem \ref{thm:main thm}]
%Fix $Y \in \calY$. By Claim \ref{cla:minimality of calY} we have that $\overline{\bigcup \calY}=\overline{\bigcup \Gamma Y} $.
Assume by contradiction that the closure of $\bigcup \calY$ is not compact. 
%By the hypothesis made on $M$, it follows that there exists a clopen, $\sigma$-compact subset $P$ of $M$ such that the closure of $X \cap P $ is not compact.
%
%Let $\bigcup_{i\in I} U_i$ be a locally finite open cover of $M$, where $U_i$ has compact closure for all $i\in I$. Such a cover exists by the assumption that $M$ is paracompact and locally compact. 
Fix a locally finite open cover $\calU$ of $M$ in which every set has compact closure (see Proposition \ref{prop:paracompactness}).
%Let $(U_n)_{n\in \mathbb{N}}$ be a sequence of sets from that cover, such that $\bigcup_{n\in \mathbb{N}} U_n$ has a non-compact closure \waltraud{explain why it exists}. For $n\in \bbN$, denote by $C_n:= \overline{\cup_{i=1}^n U_i}$, and by $C:=\overline{\cup_{i\in \bbN}U_i}$. 
%Note that $C_n$ is compact but $C$ is not.
Let $Y\in \calY$.
We will construct by induction a sequence $(U_n)_{n}$ of sets from the cover $\calU$ and a sequence $(g_n)_{n}$ of elements of $\Gamma$ such that $g_n Y \cap U_i \neq \emptyset$ for all indices $n$ and $i$ with $n\geq i$. 

For the base case, we take $g_0 = 1_\Gamma$ and $U_0$ any element of $\calU$ that intersects $Y$. Suppose  now that $U_0, \ldots, U_n$ and $g_n$ have already been constructed. Let $\calW = \bigcap_{i \leq n} \Hit(U_i) \cap \calY$ and note that $\calW$ is non-emtpy because it contains $g_nY$. By Lemma~\ref{lem:nbhd_W_and_cpct}, there exists $g \in \Gamma$ such that $gY \in \calW$ and $gY \nsubseteq \bigcup_{i \leq n} \overline U_i$. Choose a point $z \in gY$ not contained in $\bigcup_{i \leq n} \overline U_i$. As $\calU$ is a cover, there exists $U \in \calU$ containing $z$. By construction, we have $U \neq U_0, \ldots, U_n$ and $gY \cap U \neq \emptyset$. Now we define $g_{n+1} = g$ and $U_{n+1} = U$.

Let $Y_0$ be any accumulation point of the net $\{g_nY : n \in \mathbb N\}$.
Since $\Hit(\overline{U_i})$ is closed, and contains $g_nY$ for every $n\geq i$, it follows that the intersection $Y_0 \cap \overline U_i$ is non-empty  for all $i$. Since $Y_0$ is compact and since the cover $\mathcal U$ is locally finite, this contradicts Lemma~\ref{lem:paracpct_cover}.
\end{proof}

\paragraph{Uniformly recurrent subgroups.}
We now show how this purely topological theorem yields an application to \emph{uniformly recurrent subgroups}, whose definition can be found in the introduction.
%is recalled below.\gil{whose definition is found in the intro?}

%\subsection{A lemma by U\v{s}akov and Wang}
A group element is called \emph{elliptic} if it is contained in a compact subgroup. The following basic lemma is concerned with the structure of compact normal subgroups of a locally compact group. For totally disconnected groups, it was proved by U\v{s}akov \cite{Ushakov1962}.

\begin{proposition}[Theorem 5.5 in \cite{wang1971compactness}]\label{prop:Ushakov}
Let $G$ be a locally compact group and let $B \subset G$ be a subset of elliptic elements, invariant under conjugation, whose closure $\overline B$ is compact. Then the subgroup $\langle B \rangle$ generated by $B$ is  normal, and its closure  $\overline{\langle B \rangle}$ is compact.
\end{proposition}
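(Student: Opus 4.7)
The plan is to first observe that normality of $\langle B \rangle$ is immediate from the conjugation-invariance of $B$, since $g\langle B \rangle g^{-1} = \langle gBg^{-1} \rangle = \langle B \rangle$; hence $H := \overline{\langle B \rangle}$ is a closed normal subgroup of $G$. What remains is to show that $H$ is compact.

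To bound $H$, I would first note that every element $g \in \langle B \rangle$ has relatively compact $G$-conjugacy class: writing $g = b_1 \cdots b_n$ with $b_i \in B \cup B^{-1}$, the conjugation-invariance of $B$ yields $xgx^{-1} \in (\overline B \cup \overline{B^{-1}})^n$, which is compact. Thus every element of $\langle B \rangle$ is an $\mathrm{FC}^-$-element of $G$, and $H$ is compactly generated, as a topological group, by the compact set $B$. I would then invoke the structure theory of $\mathrm{FC}^-$-groups, going back to U\v{s}akov and developed by Wang and Grosser--Moskowitz, according to which a compactly generated locally compact group all of whose elements have relatively compact conjugacy classes has relatively compact commutator subgroup. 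Granted this, $K := \overline{[H, H]}$ is a compact normal subgroup of $H$.

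To conclude, I would reduce to the abelian case. Write $\pi \colon H \to H/K$ for the projection. Then $H/K$ is a compactly generated abelian locally compact group, hence by the structure theorem isomorphic to $\bbR^a \times \bbZ^b \times C$ for some compact $C$; in this decomposition the elliptic elements form the compact subgroup $\{0\} \times \{0\} \times C$. Since $\pi(B)$ is compact and consists of elliptic elements (continuous homomorphisms preserve ellipticity), it lies in this compact subgroup. Consequently $\pi(\langle B \rangle) = \langle \pi(B) \rangle$ has compact closure in $H/K$, and pulling back through $\pi$, whose kernel $K$ is compact, gives that $\langle B \rangle$ has compact closure in $H$. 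Therefore $H$ itself is compact.

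The main obstacle is the $\mathrm{FC}^-$-structure input, specifically verifying that the conclusion $\overline{[H, H]}$ compact applies to $H$ here. The dense subgroup $\langle B \rangle$ consists of $\mathrm{FC}^-$-elements of $G$, but this does not automatically extend to $H$, since limits of $\mathrm{FC}^-$-elements need not be $\mathrm{FC}^-$. One way to finesse this is to apply the structure theorem instead within the $\mathrm{FC}^-$-subgroup $G_{\mathrm{FC}} \le G$ (which contains $\langle B \rangle$) to a suitable compactly generated subgroup containing $\langle B \rangle$, or alternatively to exploit that $(B \cup B^{-1})^n$ is compact and $G$-conjugation-invariant for each $n$ in order to verify the $\mathrm{FC}^-$-condition on $H$ directly.
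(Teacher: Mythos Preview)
The paper does not supply a proof of this proposition; it is quoted as Theorem~5.5 of Wang \cite{wang1971compactness} and used as a black box. So there is no ``paper's proof'' to compare against, and I evaluate your outline on its own merits.

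Your overall architecture is the standard one and is sound once the key input is in place: normality of $\langle B\rangle$ is immediate, the reduction to the abelian quotient $H/\overline{[H,H]}$ is clean, and the endgame using the structure theorem for compactly generated locally compact abelian groups (so that $\pi(B)$ lands in the compact factor) is correct. You also identify the genuine difficulty accurately: the $\mathrm{FC}^-$-condition is only verified on the dense subgroup $\langle B\rangle$, and the structure theorem you want to invoke (compactly generated $\mathrm{FC}^-$ implies $\overline{[H,H]}$ compact) needs it on all of $H$.

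Of your two proposed fixes, the second one does not close the gap. Knowing that each $(B\cup B^{-1})^n$ is compact and conjugation-invariant does not control an arbitrary $h\in H$, which is a limit of products of \emph{unbounded} length; there is no single $n$ bounding the conjugacy class of $h$. Indeed, showing that $\bigcup_n(\overline{B}\cup\overline{B}^{-1})^n$ is relatively compact is precisely the conclusion you are after, so this route is circular as stated.

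Your first fix is closer to what actually works, but it needs care: $B(G)$ (the $\mathrm{FC}^-$-centre) is in general not closed in $G$, so it is not itself a locally compact group and you cannot directly run the structure theory inside it. The clean way around this is to replace $G$ by $H$ at the outset (this is harmless: $B\subset H$ is still relatively compact, $H$-conjugation-invariant, and elliptic in $H$), so that one is reduced to showing that a compactly generated locally compact group with a dense normal subgroup of elliptic $\mathrm{FC}^-$-elements is compact. Wang's argument at this point does not try to extend the $\mathrm{FC}^-$-property to all of $H$ directly; instead it passes through Gleason--Yamabe to reduce to a Lie quotient, where the conjugation-invariant compact set of elliptic elements can be analysed via maximal compact subgroups. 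If you want a self-contained write-up, that reduction is the missing ingredient, not either of the two shortcuts you sketch.
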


For a group $G$ denote by $\Inn(G)$ the group of inner automorphisms of $G$,
i.e. all automorphisms given by conjugation by a group element,
and by $\Aut(G)$ the group of all automorphisms of $G$. We note the following immediate corollary of Proposition \ref{prop:Ushakov}.

\begin{corollary}\label{cor:Ushakov_Gamma_version}
Let $G$ be a locally compact group.
Let $\Gamma$ be a group of automorphisms of $G$ with $\Inn(G) \leq \Gamma \leq \Aut(G)$.
Let $B \subset G$ be a subset of elliptic elements, with compact closure, that is invariant under $\Gamma$.
Then the group $\langle B \rangle$ generated by $B$ is a $\Gamma$-invariant subgroup of $G$ whose closure is compact.
\end{corollary}

\begin{remark}
Some assumption on $\Gamma$ is clearly necessary in this corollary, since in general, even in a discrete group, two elliptic elements may generate an infinite subgroup.
\end{remark}

%\subsection{Applications to subgroups and subgroup systems}
Let $G$ be a locally compact group. As in the case of a general topological space $M$, we denote by $2^G$ the space of closed subsets of $G$, equipped with the Chabauty topology. Let $\Sub(G)\subset 2^G$ denote the subspace of closed subgroups of $G$ with the induced topology. It is often called the \emph{Chabauty space} of $G$. It is a closed subset of $2^G$, therefore also compact and Hausdorff. The conjugation $G$-action turns $\Sub(G)$ into a $G$-space.

%Let $G$ be a locally compact group. A natural action of $G$ to consider is the action by conjugation on the space of its closed subgroups $M=\Sub(G)\subset 2^G$. Note that $M=\Sub(G)$ is compact and Hausdorff and so it satisfies the assumptions of Theorem \ref{thm:main thm under condition A}.

In what comes next, we wish to apply Theorem \ref{thm:main thm} with $M=G$ a locally compact group. For that purpose, we need the following basic fact.

\begin{proposition}\label{pro:lc group is paracompact}   
Locally compact topological groups are paracompact.
\end{proposition}
\begin{proof}
Let $G$ be a locally compact group and $K$ a compact identity neighbourhood. Let $P = \bigcup_{n \geq 1} (K \cup K^{-1})^n$ be the subgroup of $G$ generated by $K$. Hence $P$ is an open subgroup, and it is $\sigma$-compact. The group $G$ is partitioned into the left cosets of $P$. This proves that $G$ is paracompact by Proposition~\ref{prop:paracompactness}.
\end{proof}
%\begin{proposition}\label{pro:Sub(G) satisfies condition A}
%Let $G$ be a locally compact group. Then for any set $X\subset G$ with a non-compact closure, there exists a clopen $\sigma$-compact subgroup $P\leq G$ such that $X\cap P$ has non-compact closure. \end{proposition}

%\begin{proof} Let $U_1=U_1^{-1}$ be an open set containing the identity with compact closure and define $P_1 := \overline{U_1}$.
%Since $X$ is not compact, there exists an open subset $U_2=U_2^{-1}$ with compact closure such that $U_2 \cap X \neq \emptyset$ and $P_1 \subset U_2$. Define $P_2 := \overline{(U_2)^2}$; note that it is compact. Now, we define $P_{n}$ recursively, as follows.  Take an open set $U_n=U_n^{-1}$ with compact closure that intersects $X$ non-trivially and contains $P_{n-1}$, and let $P_n := \overline{(U_n)^n}$. Note that $P_n$ is compact, and contained in the interior of $P_{n+1}$. Then, $P := \bigcup_{n \geq 1} P_n = \bigcup_{n \geq 1} Int(P_n)$ is an open, $\sigma$-compact subgroup of $G$ and in particular also closed.

%The union $\bigcup_{n\geq 1} (Int(P_n)\cap \overline{X})$ forms an open cover of $P\cap \overline{X}$. It cannot have a finite subcover by the choice of the $U_n$, showing that the closure of $P\cap X$ is non-compact. \end{proof}

By this proposition, every locally compact group satisfies the assumptions on the space $M$ in Theorem~\ref{thm:main thm}. We derive the following. 

\begin{theorem}\label{thm:main thm with Gamma}
Let $G$ be a locally compact group.
Consider a group $\Gamma$ with $\Inn(G)\leq \Gamma\leq \Aut(G)$. Let $(\bfY,\Gamma)$ be a minimal subsystem of $(\Sub(G),\Gamma)$, such that all elements of $\calY$ are compact. Then $\bigcup \mathcal{Y}$ is contained in a compact, $\Gamma$-invariant subgroup of $G$.
\end{theorem}

\begin{proof}
By Theorem~\ref{thm:main thm}, $\bigcup\bfY$ has compact closure. Since $\bigcup \bfY$ is $\Gamma$-invariant and consists only of elliptic elements, Corollary \ref{cor:Ushakov_Gamma_version} implies that it must be contained in a compact subgroup that is $\Gamma$-invariant.
\end{proof}

%\waltraud{Given the definition of URS does not really show up in the proof, I would erase this paragraph (and also the promise to recall the definition above). The reader anyway has to browse back to the introduction to read what the corollary was.}
%A \emph{uniformly recurrent subgroup} (or \emph{URS}) of $G$ is a closed subset of $\Sub(G)$ that is invariant under the $G$-action by conjugation, and on which the $G$-action is minimal.
%Specializing Theorem~\ref{thm:main thm with Gamma} to $\Gamma = \Inn(G)$, 
We obtain Corollary \ref{cor:main cor with URS} as an immediate consequence. 

\begin{proof}[Proof of Corollary \ref{cor:main cor with URS}]
Apply Theorem~\ref{thm:main thm with Gamma} with the group $\Gamma = \Inn(G)$.
\end{proof}

%\begin{corollary}\label{cor:main cor with URS} Let $G$ be a locally compact group and $\calY$ a URS of $G$ consisting of compact subgroups. Then $\bigcup \mathcal{Y}$ is contained in a compact normal subgroup.     \end{corollary}

\section{Some examples of non-paracompact spaces}\label{sec:special cases}

In this final section, we establish other sufficient conditions, providing further partial answers to  Question~\ref{ques:main}. 

\paragraph{The first uncountable ordinal.}
Here is an example for Remark \ref{rmk:emptyset} that is otherwise not covered by our results. Take $M=\omega_1$ the first uncountable ordinal with the order topology, and $\Gamma$ any group of homeomorphisms of $\omega_1$ containing all compactly supported homeomorphisms.
Recall that an element of $2^{\omega_1}$ is compact if and only if it is contained in a closed interval.
Define for every limit ordinal $\alpha$ the map $g_\alpha\colon \omega_1\to \omega_1$ by
\[
     g_\alpha(x) = \begin{cases}
                     \alpha + x + 1 & x < \omega_0 \\
                     \alpha + x & \omega_0 \leq x  \leq \alpha \\
                     n - 1 & x = \alpha + n, \, 0 < n < \omega_0 \\
                     \beta & x = \alpha + \beta, \, \omega_0 \leq \beta \leq \alpha \\
                     x & \text{else.}
                \end{cases}
     \]
     It is a homeomorphism exchanging the clopen intervals $[0,\alpha]$ and $[\alpha + 1,\alpha \cdot 2]$, and fixing everything else. We have that $g_\alpha\in \Gamma$, since it is supported on the compact subset $[0,\alpha \cdot 2]$.

\paragraph{Finitely many connected components.}
In what follows, we do not impose a condition on $M$, but rather on elements of $\calY$.

\begin{theorem}
\label{prop:main thm finitely many conn component}
%Let $\Gamma$ be a group acting by homeomorphisms on a locally compact, Hausdorff space $M$, and consider the associated action of $\Gamma$ on the space $2^M$ of closed subsets of $M$.
%Let $\calY \subset 2^M$ be minimal, closed and $\Gamma$-invariant, and suppose that $\calY$ contains an element with finitely many connected components.
Let $\Gamma$ be a group and let $M$ be a locally compact, Hausdorff $\Gamma$-space. Let $(\calY,\Gamma)$ be a minimal subsystem of $(2^M,\Gamma)$, such that all elements of $\calY$ are compact. If $\calY$ contains an element with finitely many connected components, then $\bigcup \calY$ has compact closure.
\end{theorem}
\begin{proof}
  Suppose towards a contradiction that $\overline{\bigcup \calY}$ is not compact.
  Let $Y \in \calY$ be an element with $n$ connected components $Y_1, \ldots, Y_n$ and let $U_1, \ldots, U_n$ be open subsets of $M$ with compact closures such that $Y_i \subset U_i$ and $\overline U_i \cap \overline U_j = \emptyset$ for $i \neq j$. Let 
  $$\calW = \big(\bigcap_i \Hit(U_i)\big) \cap \big(\bigcap_i \Miss(\overline{U_i} \setminus U_i)\big) \cap \mathcal Y,$$ 
  and note that $\calW$ is non-empty because $Y \in \calW$. 
  Let $g \in \Gamma$ with $gY \in \calW$.
  Then, for every $i=1,\dots,n$ there is a connected component $gY_j$ of $gY$ with $gY_j \cap U_i \neq \emptyset$, which implies $gY_j \subset U_i$ since $gY_j$ is connected and does not intersect the boundary of $U_i$. Because $U_1,\dots,U_n$ are disjoint and because there are precisely $n$ connected components we get $gY \subset U_1 \cup \dots \cup U_n \subset \overline{U_1} \cup \dots \cup \overline{U_n}$.
  This is a contradiction to Lemma~\ref{lem:nbhd_W_and_cpct}.
  % , there exists $g \in \Gamma$ such that $gY \in \calW$ and $gY \nsubseteq \bigcup_i \overline{U_i}$. This is, however, not possible:
  % Since each $gY_i$ is connected, it is either disjoint from or contained in $U_j$.
  %
  % because $gY \cap U_i  = gY \cap \overline{U_i}$, for $i \leq n$, and $gY \cap (M \setminus \bigcup_i U_i) = gY \cap (M \setminus \overline{\bigcup_i U_i})$ are $n+1$ non-empty, disjoint, clopen subsets of $gY$, and $gY$ (being a homeomorphic image of $Y$) must have $n$ connected components.
\end{proof}

\paragraph{Linear continua.}
A standard example of a space that is not paracompact is the long line.
In this subsection we prove that our main result still holds for this space, or more generally, for any linear continuum.

Recall that a \emph{linear continuum} is a totally ordered space $M$ with the order topology, such that the order is dense (i.e., for all $x,y \in M$ with $x<y$ there is $z \in M$ with $x < z < y$) and such that $M$ has the least upper bound property (i.e., every subset with an upper bound has a least upper bound). Equivalently, it is a totally ordered space with the order topology that is connected. Examples are the real numbers $\bbR$ with the usual order, the long line, and $[0,1]^2$ with the lexicographical ordering (remove the endpoints, $\{(0,0),(1,1)\}$, to make it non-compact).
% Recall that the long line can be obtained by taking two copies of $\omega_1 \times [0,1)$ with the lexicographical ordering, where $\omega_1$ is the first uncountable ordinal with the order topology, reversing the order in one of the copies and identifying both $(0,0)$.
% \gil{actually, why is it relevant for the statement (or for the proof) to understand how the long line in constructed?}
% \waltraud{It isn't, I just thought it would be good if the readers get the construction presented. Shall we put it into an example-environment?}

It follows from the density of the order that every linear continuum is Hausdorff. Moreover, the proof that all closed intervals in $\bbR$ are compact carries over to linear continua, showing that they are also locally compact.

\begin{proposition}\label{prop:main thm ordered space}
Let $\Gamma$ be a group acting by homeomorphisms on a 
%totally ordered space $(M,<)$ equipped with the order topology
linear continuum $(M,<)$. 
Let $(\calY,\Gamma)$ be a minimal subsystem of $(2^M,\Gamma)$, such that all elements of $\calY$ are compact. Then $\bigcup \calY$ has compact closure.
%Let $\Gamma$ be a group acting by homeomorphisms on a totally ordered space $M$ with the order topology. Assume that $M$ is connected and locally compact. %\waltraud{Hausdorff is automatic with the order topology}.
%Consider the associated action of $\Gamma$ on the space $2^M$ of closed subsets of $M$ and let $\calY$ be a minimal, closed, $\Gamma$-invariant subset of $2^M$.
%If all elements of $\calY$ are compact, then $\bigcup \calY$ has compact closure.
\end{proposition}

\begin{proof}
We recall the following facts, most of which can be found in \cite[§24]{Munkres2000}. The set $M$ has the greatest lower bound property.
A subset of $M$ is connected if and only if it is an interval. A subset of $M$ is compact if and only if it is closed and contained in a bounded interval. 
It is now easy to see from the least upper and greatest lower bound property that every non-empty compact subset has a maximum and a  minimum. 
Also, it is an easy consequence of the Intermediate Value Theorem that every homeomorphism either preserves or reverses the order.
It follows that after passing to an index two subgroup if necessary, we can assume that $\Gamma$ preserves the order on $M$.

We assume that $\emptyset \notin \calY$, since otherwise $\calY = \{\emptyset\}$ and we would be done already.

For a compact subset $A \subset M$ denote by $\conv(A) := [\min A, \max A]$ its convex hull. Since $\Gamma$ preserves the order on $M$, we have that $\conv(gA)=g(\conv(A))$ for all $g\in \Gamma$. The maps $\min, \max \colon 2^M \to M$ are not continuous, not even when restricted to compact sets. %However, we show that it is continuous on $2^K$ for every compact $K \subset M$.
However, we have the following.

\emph{Claim 1:} The maps $\min$ and $\max$ are continuous on $2^K$ for every compact $K \subset M$.

\emph{Proof:} Fix a compact $K\subset M$, and assume without loss of generality that $K=[k^-,k^+]$ is a closed interval. Let $a\in [k^-,k^+]$ and $A\in 2^K$. We have that $\min A < a \iff A \cap (-\infty, a) \neq \emptyset$ and $\min A > a \iff A \cap [k^-, a] = \emptyset$, both of which are open conditions. The proof for $\max$ is similar. 
\qed

\emph{Claim 2:} There exists $Z \in \calY$ such that $\conv(Z)$ is inclusion-minimal among all $\conv(Y)$ with $Y\in \calY$.

\emph{Proof:} Let $\preceq$ be the partial order on $\calY$ defined by $Y \preceq Y'$ if $\conv(Y) \subset \conv(Y')$. We need to show that there exists a $\preceq$-minimal element in $\calY$. 
By Zorn's lemma, it is enough to show that every $\preceq$-chain in $\calY$ has a lower bound. Let $\{Y_i : i\in I\}$ be such a chain; note that it is also a net (directed downwards). By compactness of $\calY$, the net $\{Y_i : i\in I\}$ admits a limit point $Y\in \calY$; note that $Y\neq \emptyset$, because $\emptyset \notin \calY$. Fix $i_0 \in I$. From the continuity of $\min$ and $\max$ on $2^{Y_{i_0}}$ it follows that $\conv(Y) \subset \conv(Y_i)$ for all $i$ and we are done. \qed

Suppose towards a contradiction that $\bigcup \calY$ is unbounded in $M$. Let $Z$ be as in the last claim and denote $\conv Z = [z^-, z^+]$. Note that, by the inclusion-minimality of $\conv(Z)$, for any $g \in \Gamma$, the sets $\conv(Z)$ and $\conv(gZ)$ cannot be strictly included in one another, i.e., one of the three possibilities must happen: $\conv(Z) = \conv(gZ)$, or $z^- <  gz^-$ and $z^+ < gz^+$, or $z^- > gz^-$ and $z^+ > gz^+$.

First consider the case where $\min \bigcup \Gamma \cdot Z = z^-$. By Lemma~\ref{lem:nbhd_W_and_cpct}, there exists $g_0 \in \Gamma$ such that $g_0Z \nsubseteq \conv(Z)$ and then we must have that $g_0 z^- > z^-$. Again by Lemma~\ref{lem:nbhd_W_and_cpct}, there exists $g_1 \in \Gamma$ such that $g_1 z^- < g_0 z^-$ and $g_1Z \nsubseteq \conv(Z \cup g_0Z)$. Now $g_1z^- < g_0z^-$ and $g_1z^+ > g_0z^+$, so $\conv(g_0Z) \subsetneq \conv(g_1Z)$, implying that $\conv(g_1^{-1}g_0Z) \subsetneq \conv(Z)$, contradiction. We treat the case where $\max \bigcup \Gamma \cdot Z = z^+$ similarly. 

Finally, suppose that there exist $g_1, g_2 \in \Gamma$ such that $g_1  z^- < z^-$ and $g_2 z^+ > z^+$. Again, by Lemma~\ref{lem:nbhd_W_and_cpct}, there exists $g \in \Gamma$ with $g z^- < g_2 z^-$, $g z^+ > g_1 z^+$, and $gZ \nsubseteq [g_1z^-, g_2z^+]$. The other case being symmetric, we may assume that $gz^+ > g_2z^+$. Then $g_2Z \subsetneq gZ$ and we arrive at a contradiction as before.
\end{proof}

\bibliographystyle{alpha}
\bibliography{references}
\end{document}